\documentclass{article}

\newtheorem{theorem}{Theorem}

\newtheorem{corollary}[theorem]{Corollary}

\newtheorem{proposition}[theorem]{Proposition}

\newenvironment{proof}[1][Proof]{\noindent\textbf{#1.} }{\ \rule{0.5em}{0.5em}}
\input{tcilatex}
\begin{document}

\date{}
\title{New results on $p$-Bernoulli numbers}
\author{Levent Karg\i n \\
Akseki Vocational School\\
Alanya Alaaddin Keykubat University\\
Antalya TR-07630 Turkey\\
leventkargin48@gmail.com}
\maketitle

\begin{abstract}
We realize that geometric polynomials and $p$-Bernoulli polynomials and
numbers are closely related with an integral representation. Therefore,
using geometric polynomials, we extend some properties of Bernoulli
polynomials and numbers such as recurrence relations, telescopic formula and
Raabe's formula to $p$-Bernoulli polynomials and numbers. In particular
cases of these results, we establish some new results for Bernoulli
polynomials and numbers. Moreover, we evaluate a Faulhaber-type summation in
terms of $p$-Bernoulli polynomials.

\textbf{2000 Mathematics Subject Classification: }11B68, 11B83

\textbf{Key words: }$p$-Bernoulli number, geometric polynomial, finite
summation
\end{abstract}

\section{Introduction}

The Bernoulli polynomials $B_{n}\left( x\right) $ are defined by exponential
generating function 
\begin{equation}
\sum_{n\geq 0}B_{n}\left( x\right) \frac{t^{n}}{n!}=\frac{te^{xt}}{e^{t}-1}%
\text{, }\left\vert t\right\vert <2\pi \text{.}  \label{13}
\end{equation}

In particular, the rational numbers $B_{n}=B_{n}\left( 0\right) $ are called
Bernoulli numbers and have an explicit formula \cite{Graham} 
\[
B_{n}=\sum_{k=0}^{n}\QATOPD\{ \} {n}{k}\frac{\left( -1\right) ^{k}k!}{k+1},
\]%
where $\QATOPD\{ \} {n}{k}$ is the Stirling number of second kind \cite%
{Graham}.

As it is well known, the Bernoulli numbers are considerable importance in
different areas of mathematics such as number theory, combinatorics, special
functions. Moreover, many generalizations and extensions of these numbers
appear in the literature. One of the generalization of the Bernoulli numbers
is $p$-Bernoulli numbers, defined by a three-term recurrence relation \cite%
{Rahmani}%
\begin{equation}
B_{n+1,p}=pB_{n,p}-\frac{\left( p+1\right) ^{2}}{p+2}B_{n,p+1},  \label{16}
\end{equation}%
with the initial condition $B_{0,p}=1$. These numbers also satisfy an
explicit formula 
\[
B_{n,p}=\frac{p+1}{p!}\sum_{k=0}^{n}\QATOPD\{ \} {n+p}{k+p}_{p}\frac{\left(
-1\right) ^{k}\left( k+p\right) !}{k+p+1},
\]%
where $\QATOPD\{ \} {n+p}{k+p}_{p}$ is the $p$-Stirling number of second
kind \cite{Broder}.

As a special case, setting $p=0$ in the above equation gives $B_{n,0}=B_{n}$.

$p$-Bernoulli polynomials which is the polynomial extension of $B_{n,p}$,
are defined by the following convolution%
\begin{equation}
B_{n,p}\left( x\right) =\sum_{k=0}^{n}\binom{n}{k}x^{n-k}B_{k,p}.  \label{18}
\end{equation}%
The first few $p$-Bernoulli polynomials are 
\begin{eqnarray*}
B_{0,p}\left( x\right)  &=&1, \\
B_{1,p}\left( x\right)  &=&x-\frac{1}{p+2}, \\
B_{2,p}\left( x\right)  &=&x^{2}-\frac{2x}{p+2}-\frac{p-1}{\left( p+2\right)
\left( p+3\right) }.
\end{eqnarray*}%
Moreover, these polynomials have integral representations 
\begin{eqnarray}
\dint\limits_{b}^{a}B_{n,p}\left( t\right) dt &=&\frac{B_{n+1,p}\left(
a\right) -B_{n+1,p}\left( b\right) }{n+1},  \label{19} \\
\dint\limits_{0}^{1}B_{n,p}\left( t\right) dt &=&\frac{1}{n+1}\sum_{k=0}^{n}%
\binom{n+1}{k}B_{k,p},  \label{20}
\end{eqnarray}%
a recurrence relation%
\begin{equation}
B_{n,p}\left( x+1\right) -B_{n,p}\left( x\right) =\sum_{k=0}^{n-1}\binom{n}{k%
}B_{k,p}\left( x\right) ,  \label{22}
\end{equation}%
and a three-term recurrence relation%
\begin{equation}
B_{n+1,p}\left( x\right) =\left( x+p\right) B_{n,p}\left( x\right) -\frac{%
\left( p+1\right) ^{2}}{p+2}B_{n,p+1}\left( x\right) .  \label{21}
\end{equation}

In the special case of (\ref{18}) when $x=0$, we obtain $B_{n,p}\left(
0\right) =B_{n,p}$.

Some other properties and applications of $p$-Bernoulli polynomials and
numbers can be found in \cite{Rahmani}.

The main formula of this paper is \cite[p. 361]{Rahmani}%
\[
\frac{1}{p+1}\sum_{n\geq 0}B_{n,p}\frac{t^{n}}{n!}=\dint\limits_{-1}^{0}%
\frac{\left( 1+y\right) ^{p}}{1-y\left( e^{t}-1\right) }dy,\text{ for }p\geq
0.
\]%
Using the generating function of geometric polynomials $w_{n}\left( y\right) 
$ (see Section 2 for details of $w_{n}\left( y\right) $), the above equation
can be written as 
\begin{equation}
\frac{1}{p+1}B_{n,p}=\dint\limits_{-1}^{0}\left( 1+y\right) ^{p}w_{n}\left(
y\right) dy  \label{17}
\end{equation}%
which is the generalization of Keller's identity \cite{KELLER}%
\[
\dint\limits_{-1}^{0}w_{n}\left( y\right) dy=B_{n}.
\]%
Thus, using this integral representation and the properties of geometric
polynomials, we generalize a recurrence relation of Bernoulli numbers to $p$%
-Bernoulli numbers and obtain an explicit formula for $p$-Bernoulli numbers.
Moreover, extending the representation (\ref{17}) to $p$-Bernoulli
polynomials, we give the generalization of the telescopic formula and
Raabe's formula of Bernoulli polynomials for $p$-Bernoulli polynomials.
Thus, as special cases of these results, we get an explicit formula, a
finite summation and a convolution identity\ for Bernoulli polynomials and
numbers. Besides, we evaluate a Faulhaber-type summation in terms of $p$%
-Bernoulli polynomials. 

First, we extend the well known recurrence relation of Bernoulli numbers%
\[
\sum_{k=0}^{n}\binom{n+1}{k}B_{k}=0\text{ \ for }n\geq 1,
\]%
to $p$-Bernoulli numbers in the following theorem.

\begin{theorem}
\label{teo3}For $n\geq 1$ and $p\geq 0$,%
\begin{equation}
\sum_{k=0}^{n}\binom{n+1}{k}B_{k,p}=-pB_{n,p}.  \label{6}
\end{equation}
\end{theorem}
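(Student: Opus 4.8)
The plan is to prove (\ref{6}) through the integral representation (\ref{17}), converting the binomial sum of $p$-Bernoulli numbers into an integral of a single geometric polynomial and then integrating by parts. Since every term $B_{k,p}$ in the sum carries the same parameter $p$, applying (\ref{17}) termwise gives
\[
\frac{1}{p+1}\sum_{k=0}^{n}\binom{n+1}{k}B_{k,p}=\int_{-1}^{0}(1+y)^{p}\left( \sum_{k=0}^{n}\binom{n+1}{k}w_{k}(y)\right) dy .
\]
The first task is therefore to simplify the inner sum of geometric polynomials.

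First I would establish the auxiliary identity
\[
\sum_{k=0}^{n}\binom{n+1}{k}w_{k}(y)=\frac{w_{n+1}(y)}{y},\qquad n\geq 0 .
\]
This follows from the generating function $\sum_{k\geq 0}w_{k}(y)t^{k}/k!=\left(1-y(e^{t}-1)\right)^{-1}$: multiplying by $e^{t}$ produces the binomial transform whose $m$-th coefficient is $\sum_{k=0}^{m}\binom{m}{k}w_{k}(y)$, while the relation $(e^{t}-1)\left(1-y(e^{t}-1)\right)^{-1}=y^{-1}\left[\left(1-y(e^{t}-1)\right)^{-1}-1\right]$ identifies the transform minus its top term $w_{m}(y)$ with $y^{-1}w_{m}(y)$. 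Taking $m=n+1$ gives the claim. Observe that $w_{n+1}(y)/y$ is a genuine polynomial, since $w_{n+1}$ has no constant term, so no singularity arises at $y=0$.

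Next I would invoke the differential recurrence $w_{n+1}(y)=y\frac{d}{dy}\left[(1+y)w_{n}(y)\right]$ for geometric polynomials, which rewrites the integrand as
\[
(1+y)^{p}\frac{w_{n+1}(y)}{y}=(1+y)^{p}\frac{d}{dy}\left[(1+y)w_{n}(y)\right].
\]
Integrating by parts with $u=(1+y)^{p}$ and $v=(1+y)w_{n}(y)$ produces the boundary term $\left[(1+y)^{p+1}w_{n}(y)\right]_{-1}^{0}$ together with $-p\int_{-1}^{0}(1+y)^{p}w_{n}(y)\,dy$. The boundary term vanishes: at $y=-1$ the factor $(1+y)^{p+1}$ kills it for $p\geq 0$, and at $y=0$ we use $w_{n}(0)=0$, which holds precisely because $n\geq 1$. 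Recognizing the surviving integral as $\frac{1}{p+1}B_{n,p}$ via (\ref{17}) then yields $\frac{1}{p+1}\sum_{k=0}^{n}\binom{n+1}{k}B_{k,p}=-\frac{p}{p+1}B_{n,p}$, which is (\ref{6}).

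I expect the crux to be the auxiliary identity $\sum_{k=0}^{n}\binom{n+1}{k}w_{k}(y)=w_{n+1}(y)/y$, since collapsing the binomial sum into a single geometric polynomial is exactly what makes the subsequent integration by parts possible; once it is in hand the remaining steps are routine, the only subtlety being the bookkeeping of the boundary term, whose vanishing at $y=0$ is where the hypothesis $n\geq 1$ enters.
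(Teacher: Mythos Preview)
Your proof is correct. Both you and the paper start from the integral representation (\ref{17}) and the binomial identity for geometric polynomials---your $\sum_{k=0}^{n}\binom{n+1}{k}w_{k}(y)=w_{n+1}(y)/y$ is just the index-shifted form of the paper's cited formula $\sum_{k=0}^{n}\binom{n}{k}w_{k}(y)=\frac{1+y}{y}w_{n}(y)$---but the two arguments diverge from there. The paper applies the reflection formula (\ref{24}) to rewrite the integrand in terms of $w_{n}(-y-1)$, substitutes, invokes the auxiliary integral (\ref{25}) to obtain $-\frac{p+1}{p+2}B_{n-1,p+1}$, and then closes with the three-term recurrence (\ref{16}). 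You instead use the differential recurrence $w_{n+1}(y)=y\frac{d}{dy}[(1+y)w_{n}(y)]$ and a single integration by parts. Your route is more self-contained: it needs neither the reflection formula, nor the evaluated integral (\ref{25}), nor the recurrence (\ref{16}), and it makes transparent exactly where the hypothesis $n\geq 1$ is used (the vanishing of $w_{n}(0)$). The paper's route, on the other hand, ties the result directly to (\ref{25}) and (\ref{16}), which fits its broader program of exploiting those identities.
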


We note that using (\ref{20}) and (\ref{22}) in the above theorem gives us
the following conclusions%
\[
B_{n,p}\left( 1\right) =B_{n,p}-pB_{n-1,p},
\]%
and%
\begin{equation}
\dint\limits_{0}^{1}B_{n,p}\left( t\right) dt=\frac{-pB_{n,p}}{n+1},
\label{7a}
\end{equation}%
respectively. Also, these results are the generalization of the following
well known properties of $B_{n}$%
\[
B_{n}\left( 1\right) =B_{n}\text{ and }\dint\limits_{0}^{1}B_{n}\left(
t\right) dt=0,\text{ \ for }n\geq 1.
\]

The Bernoulli numbers are connected with some well known special numbers 
\cite{Can1, Cenkci2, Merca2, Merca1, Mezo2, Mihioubi2}\textbf{.} Rahmani 
\cite{Rahmani} also gave explicit formulas involving different kind of
special numbers. Now, we obtain a new explicit formula for $B_{n,p}$, and
hence $B_{n}$, in the following theorem.

\begin{theorem}
\label{teo4}For $n\geq 1$ and $p\geq 0$,%
\begin{equation}
B_{n,p}=\left( p+1\right) \sum_{k=1}^{n}\QATOPD\{ \} {n}{k}\frac{\left(
-1\right) ^{k+n}k!}{\left( k+p\right) \left( k+p+1\right) }.  \label{8}
\end{equation}%
When $p=0$ this becomes%
\begin{equation}
B_{n}=\sum_{k=1}^{n}\QATOPD\{ \} {n}{k}\frac{\left( -1\right) ^{k+n}\left(
k-1\right) !}{k+1}.  \label{26}
\end{equation}
\end{theorem}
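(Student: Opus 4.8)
The plan is to start from the integral representation (\ref{17}) and to transform the geometric polynomial under the integral sign by a reflection identity, after which a single term-by-term integration produces exactly the shape $k!/[(k+p)(k+p+1)]$ appearing in (\ref{8}). First I would rewrite (\ref{17}) over the unit interval: the substitution $y=-s$ gives
\[
\frac{1}{p+1}B_{n,p}=\int_{0}^{1}\left( 1-s\right) ^{p}w_{n}\left( -s\right) ds.
\]
The naive move here---inserting $w_{n}(-s)=\sum_{k}\QATOPD\{\}{n}{k}(-1)^{k}k!\,s^{k}$ and integrating---only reproduces a Beta-type expression whose $k$-th weight is $(k!)^{2}p!/(k+p+1)!$, which is \emph{not} (\ref{8}); the identity (\ref{8}) lives on the ``reflected'' side, so a genuine transformation is needed.

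The decisive ingredient is the reflection formula for geometric polynomials,
\[
y\,w_{n}\left( -1-y\right) =\left( -1\right) ^{n}\left( 1+y\right) w_{n}\left( y\right) ,\qquad n\geq 1,
\]
which I would establish in one line from the generating function $W(y,t)=\sum_{n}w_{n}(y)t^{n}/n!=1/\left( 1-y(e^{t}-1)\right) $ by verifying $yW(-1-y,t)-(1+y)W(y,-t)=-1$, the constant $-1$ being precisely the excluded $n=0$ term. Solving for $w_{n}(-s)$ and substituting turns $w_{n}(-s)$ into $w_{n}(s-1)$ up to the factor $(-1)^{n+1}s/(1-s)$, so that $\int_{0}^{1}(1-s)^{p}w_{n}(-s)\,ds=(-1)^{n+1}\int_{0}^{1}(1-s)^{p-1}s\,w_{n}(s-1)\,ds$; a second substitution $r=1-s$ then yields
\[
\frac{1}{p+1}B_{n,p}=\left( -1\right) ^{n+1}\int_{0}^{1}r^{p-1}\left( 1-r\right) w_{n}\left( -r\right) dr.
\]
Now expanding $w_{n}(-r)=\sum_{k=1}^{n}\QATOPD\{\}{n}{k}(-1)^{k}k!\,r^{k}$ (the $k=0$ term drops since $\QATOPD\{\}{n}{0}=0$ for $n\geq 1$) and using the elementary integral $\int_{0}^{1}r^{k+p-1}(1-r)\,dr=1/[(k+p)(k+p+1)]$ gives (\ref{8}) after collecting the signs.

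The hard part will be the reflection formula: without it one is stuck on the ``wrong'' side of the identity, and the Beta-integral expansion cannot be massaged into the product $(k+p)(k+p+1)$ term by term, since the ratio of the two weightings is $(k+p)/\binom{k+p}{k}$, which is not constant in $k$. Everything else is routine: interchanging the finite sum with the integral is immediate, and $\int_{0}^{1}r^{k+p-1}(1-r)\,dr$ converges for all $k\geq 1$ and $p\geq 0$---in particular the apparent singularity of $r^{p-1}$ at $p=0$ is cancelled by the factor $r^{k}$ with $k\geq 1$. Finally, setting $p=0$ and simplifying $k!/[k(k+1)]=(k-1)!/(k+1)$ specializes (\ref{8}) to the stated formula (\ref{26}) for the classical Bernoulli numbers.
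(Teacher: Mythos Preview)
Your argument is correct and in substance the same as the paper's: both rest on the reflection property of $w_{n}$ and a Beta integral. The paper simply inserts the reflected explicit formula (\ref{23}), $w_{n}(y)=y\sum_{k}\QATOPD\{\}{n}{k}(-1)^{n+k}k!(y+1)^{k-1}$, into (\ref{17}) and evaluates $\int_{-1}^{0}y(y+1)^{p+k-1}dy$ via one substitution $x=y+1$; since (\ref{23}) is exactly the reflection formula (\ref{24}) applied to the standard Stirling expansion, and your two substitutions $y\mapsto -s$, $s\mapsto 1-r$ compose to the paper's $y\mapsto x-1$, your longer route reproduces the same computation.
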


In order to deal with some properties of $p$-Bernoulli polynomials, we need
to extend the integral representation (\ref{17}) to $B_{n,p}\left( x\right) $%
.

\begin{proposition}
\label{pro2}Let $n$ and $p$ be the non-negative integers. Then we have%
\begin{equation}
\frac{1}{p+1}B_{n,p}\left( x\right) =\dint\limits_{-1}^{0}\left( 1+y\right)
^{p}w_{n}\left( x;y\right) dy,  \label{2}
\end{equation}%
where $w_{n}\left( x;y\right) $ (see Section 2) is two variable geometric
polynomials.
\end{proposition}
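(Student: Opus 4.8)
The plan is to reduce the claim to the scalar identity (\ref{17}) by exploiting the fact that both $B_{n,p}(x)$ and the two-variable geometric polynomials $w_n(x;y)$ are obtained from their $x=0$ counterparts by the \emph{same} binomial convolution with powers of $x$. Since (\ref{18}) expresses $B_{n,p}(x)$ as $\sum_{k=0}^{n}\binom{n}{k}x^{n-k}B_{k,p}$, and (\ref{17}) already supplies the integral representation of each $B_{k,p}/(p+1)$, the result should fall out by linearity once the corresponding expansion of $w_n(x;y)$ is available.

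Concretely, first I would start from the convolution definition (\ref{18}), divide through by $p+1$, and substitute (\ref{17}) into each term:
\[
\frac{1}{p+1}B_{n,p}(x)=\sum_{k=0}^{n}\binom{n}{k}x^{n-k}\frac{B_{k,p}}{p+1}=\sum_{k=0}^{n}\binom{n}{k}x^{n-k}\int_{-1}^{0}(1+y)^{p}w_{k}(y)\,dy.
\]
Because the sum is finite, I may freely interchange summation and integration to obtain
\[
\frac{1}{p+1}B_{n,p}(x)=\int_{-1}^{0}(1+y)^{p}\left(\sum_{k=0}^{n}\binom{n}{k}x^{n-k}w_{k}(y)\right)dy.
\]
It then remains only to identify the inner sum with $w_{n}(x;y)$. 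The key step is the expansion
\[
w_{n}(x;y)=\sum_{k=0}^{n}\binom{n}{k}x^{n-k}w_{k}(y),
\]
which is exactly the $x$-convolution recorded for the two-variable geometric polynomials in Section 2; it follows at once from their generating function $\sum_{n\geq 0}w_{n}(x;y)\,t^{n}/n!=e^{xt}/\left(1-y(e^{t}-1)\right)$, since multiplying the one-variable generating function $\sum_{n\geq 0}w_{n}(y)\,t^{n}/n!=1/\left(1-y(e^{t}-1)\right)$ by $e^{xt}$ produces precisely this binomial convolution. Substituting the expansion into the displayed integral yields (\ref{2}).

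The argument is essentially bookkeeping: the interchange of the finite sum and the integral is immediate, and the substitution of (\ref{17}) is direct. The only point demanding care — and the one I would flag as the main obstacle — is confirming that the two-variable geometric polynomials are defined so that the binomial expansion above holds, equivalently that their generating function carries the factor $e^{xt}$ in the same manner that (\ref{18}) attaches $x^{n-k}$ to $B_{k,p}$. Once this parallel between the definitions of $B_{n,p}(x)$ and $w_{n}(x;y)$ is in place from Section 2, the proposition is immediate.
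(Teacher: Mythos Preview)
Your argument is correct and is essentially the paper's own proof: both use the convolution (\ref{10}) to split $w_{n}(x;y)$ into $\sum_{k}\binom{n}{k}x^{n-k}w_{k}(y)$, apply (\ref{17}) termwise, and then recognize $B_{n,p}(x)$ via (\ref{18}). The only difference is the order in which you traverse the chain, which is immaterial.
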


One of the important properties of $B_{n}\left( x\right) $ is the telescopic
formula 
\[
B_{n}\left( x+1\right) -B_{n}\left( x\right) =nx^{n-1}.
\]%
From this formula, Bernoulli gave a closed formula for Faulhaber's summation
in terms of Bernoulli polynomials and numbers%
\begin{equation}
\sum_{k=0}^{m}k^{n}=\frac{B_{n+1}\left( m+1\right) -B_{n+1}}{n+1}.  \label{3}
\end{equation}

Now, we want to give an extension of telescopic formula for $p$-Bernoulli
polynomials.

\begin{proposition}
\label{pro1}For any non-negative integer $n$ and $p$,%
\begin{equation}
B_{n,p+1}\left( x+1\right) -B_{n,p+1}\left( x\right) =\frac{p+2}{p+1}\left(
B_{n,p}\left( x+1\right) -x^{n}\right) .  \label{28}
\end{equation}
\end{proposition}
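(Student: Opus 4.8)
The plan is to pass to exponential generating functions and exploit the integral representation of Proposition~\ref{pro2}. Multiplying (\ref{2}) by $t^{n}/n!$, summing over $n$, and inserting the generating function of the two-variable geometric polynomials from Section~2, namely $\sum_{n\geq 0}w_{n}\left( x;y\right) t^{n}/n!=e^{xt}/\left( 1-y\left( e^{t}-1\right) \right) $, one obtains the polynomial analogue of the main formula,
\begin{equation}
\sum_{n\geq 0}\frac{1}{p+1}B_{n,p}\left( x\right) \frac{t^{n}}{n!}=\int_{-1}^{0}\left( 1+y\right) ^{p}\frac{e^{xt}}{1-y\left( e^{t}-1\right) }\,dy. \label{planGF}
\end{equation}
I would then prove (\ref{28}) by checking that its two sides have the same generating function in $t$ and comparing coefficients of $t^{n}/n!$.

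For the left-hand side I would apply (\ref{planGF}) with $p$ replaced by $p+1$ at the arguments $x+1$ and $x$ and subtract, so that the shift produces the factor $e^{\left( x+1\right) t}-e^{xt}=e^{xt}\left( e^{t}-1\right) $:
\[
\sum_{n\geq 0}\left[ B_{n,p+1}\left( x+1\right) -B_{n,p+1}\left( x\right) \right] \frac{t^{n}}{n!}=\left( p+2\right) \int_{-1}^{0}\left( 1+y\right) ^{p+1}\frac{e^{xt}\left( e^{t}-1\right) }{1-y\left( e^{t}-1\right) }\,dy.
\]
For the right-hand side, the term $B_{n,p}\left( x+1\right) $ contributes $\left( p+1\right) \int_{-1}^{0}\left( 1+y\right) ^{p}e^{\left( x+1\right) t}/\left( 1-y\left( e^{t}-1\right) \right) dy$ by (\ref{planGF}), while $x^{n}$ contributes $e^{xt}$; hence the generating function of $\tfrac{p+2}{p+1}\left( B_{n,p}\left( x+1\right) -x^{n}\right) $ equals
\[
\left( p+2\right) \int_{-1}^{0}\left( 1+y\right) ^{p}\frac{e^{\left( x+1\right) t}}{1-y\left( e^{t}-1\right) }\,dy-\frac{p+2}{p+1}e^{xt}.
\]

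It then remains to verify that these two expressions agree, which is the only genuine computation. Dividing both by $\left( p+2\right) e^{xt}$ and writing $u=e^{t}-1$, equality reduces to
\[
\int_{-1}^{0}\left( 1+y\right) ^{p}\frac{1+u}{1-yu}\,dy-\int_{-1}^{0}\left( 1+y\right) ^{p+1}\frac{u}{1-yu}\,dy=\frac{1}{p+1}.
\]
Combining the two integrands over the common denominator $1-yu$, the numerator factors as $\left( 1+y\right) ^{p}\left[ \left( 1+u\right) -\left( 1+y\right) u\right] =\left( 1+y\right) ^{p}\left( 1-yu\right) $, so the denominator cancels and the left-hand side collapses to $\int_{-1}^{0}\left( 1+y\right) ^{p}dy=1/\left( p+1\right) $, as required.

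This telescoping cancellation in the integrand is the heart of the argument; everything else is the routine bookkeeping of matching coefficients of $t^{n}/n!$, where the only thing to watch is the correct placement of the factors $p+1$ and $p+2$ coming from (\ref{planGF}) at the two levels $p$ and $p+1$. An alternative, purely algebraic route would start from the difference relation (\ref{22}) at level $p+1$ and use the three-term recurrence (\ref{21}) to eliminate $B_{n,p+1}\left( x\right) $ in favour of $B_{n,p}\left( x\right) $ and $B_{n+1,p}\left( x\right) $, but the generating-function computation above is shorter and makes the cancellation transparent.
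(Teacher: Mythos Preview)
Your argument is correct and is essentially the paper's proof unpacked at the level of generating functions. The paper quotes the known identity for two-variable geometric polynomials
\[
w_{n}\left( x+1;y\right) -w_{n}\left( x;y\right) =\frac{1}{1+y}\left( w_{n}\left( x+1;y\right) -x^{n}\right),
\]
multiplies by $(1+y)^{p+1}$, integrates over $y\in[-1,0]$, and reads off (\ref{28}) via Proposition~\ref{pro2}. Your ``telescoping cancellation'' $(1+u)-(1+y)u=1-yu$ is exactly the generating-function form of that identity, so the two proofs differ only in packaging: the paper cites the $w_{n}$ relation and integrates coefficientwise, whereas you first pass to generating functions and then verify the same cancellation inside the $y$-integral. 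The only gain in your version is that it is self-contained (it does not rely on the external reference for the $w_{n}$ identity); the paper's version is a couple of lines shorter.
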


This telescopic formula for $p$-Bernoulli polynomials gives us the
evaluation of finite summation of $p$-Bernoulli polynomials. In particular
case $p=0$, we arrive at a new finite summation involving Bernoulli
polynomials.

\begin{theorem}
\label{teo5}For any non-negative integer $n,m$ and $p$, 
\begin{equation}
\sum_{k=0}^{m}B_{n,p}\left( k+1\right) =\frac{B_{n+1}\left( m+1\right)
-B_{n+1}}{n+1}+\frac{p+1}{p+2}\left( B_{n,p+1}\left( m+1\right)
-B_{n,p+1}\right) .  \label{4}
\end{equation}%
When $p=0$ this becomes%
\begin{equation}
\sum_{k=0}^{m}\left[ B_{n}\left( k+1\right) +nk^{n}\right] =\left(
m+1\right) B_{n}\left( m+1\right) .  \label{27}
\end{equation}
\end{theorem}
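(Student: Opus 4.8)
The plan is to derive the main identity (\ref{4}) directly from the telescopic formula of Proposition \ref{pro1} and Faulhaber's formula (\ref{3}). First I would rearrange (\ref{28}) to isolate the summand: solving for $B_{n,p}\left(x+1\right)$ gives
\[
B_{n,p}\left(x+1\right)=x^{n}+\frac{p+1}{p+2}\bigl(B_{n,p+1}\left(x+1\right)-B_{n,p+1}\left(x\right)\bigr).
\]
Then I would substitute $x=k$ and sum over $k=0,1,\dots,m$. On the right-hand side, the differences $B_{n,p+1}\left(k+1\right)-B_{n,p+1}\left(k\right)$ telescope to $B_{n,p+1}\left(m+1\right)-B_{n,p+1}\left(0\right)=B_{n,p+1}\left(m+1\right)-B_{n,p+1}$, while the power-sum term $\sum_{k=0}^{m}k^{n}$ is evaluated by Faulhaber's formula (\ref{3}) as $\bigl(B_{n+1}\left(m+1\right)-B_{n+1}\bigr)/(n+1)$. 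Combining these two evaluations yields (\ref{4}) immediately; this part is entirely routine and presents no obstacle.

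The substantive step is the reduction of the $p=0$ case to the clean identity (\ref{27}). Setting $p=0$ in (\ref{4}) and using $B_{n,0}\left(x\right)=B_{n}\left(x\right)$ produces
\[
\sum_{k=0}^{m}B_{n}\left(k+1\right)=\frac{B_{n+1}\left(m+1\right)-B_{n+1}}{n+1}+\frac{1}{2}\bigl(B_{n,1}\left(m+1\right)-B_{n,1}\bigr),
\]
so I must eliminate the auxiliary polynomials $B_{n,1}$, which do not appear in the target formula. The key observation is that the three-term recurrence (\ref{21}) at $p=0$ reads $B_{n+1}\left(x\right)=x\,B_{n}\left(x\right)-\tfrac{1}{2}B_{n,1}\left(x\right)$, which I would rearrange to $B_{n,1}\left(x\right)=2x\,B_{n}\left(x\right)-2B_{n+1}\left(x\right)$. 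Evaluating this at $x=m+1$ and at $x=0$ (where it gives $B_{n,1}=-2B_{n+1}$) lets me rewrite the half-difference as $\tfrac{1}{2}\bigl(B_{n,1}\left(m+1\right)-B_{n,1}\bigr)=\left(m+1\right)B_{n}\left(m+1\right)-B_{n+1}\left(m+1\right)+B_{n+1}$.

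Finally I would add $n\sum_{k=0}^{m}k^{n}=\tfrac{n}{n+1}\bigl(B_{n+1}\left(m+1\right)-B_{n+1}\bigr)$ to both sides. On the right-hand side the power-sum contribution combines with the Faulhaber term via $\tfrac{1}{n+1}+\tfrac{n}{n+1}=1$, producing a full $B_{n+1}\left(m+1\right)-B_{n+1}$; this cancels exactly against the $-B_{n+1}\left(m+1\right)+B_{n+1}$ coming from the $B_{n,1}$ substitution, leaving only $\left(m+1\right)B_{n}\left(m+1\right)$, which is (\ref{27}). I expect the cancellation bookkeeping in this last step to be the main place where sign and indexing errors could creep in, so I would carry the $B_{n+1}\left(m+1\right)$ and constant $B_{n+1}$ terms explicitly rather than collapsing them prematurely.
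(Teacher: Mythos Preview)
Your proposal is correct and follows essentially the same route as the paper's proof: both derive (\ref{4}) by summing the telescopic relation (\ref{28}) over $k=0,\dots,m$ and invoking Faulhaber's formula, and both handle the $p=0$ case by using the three-term recurrence (\ref{21}) at $p=0$ (together with its $x=0$ specialization, which the paper phrases as invoking (\ref{16})) to express $B_{n,1}(x)$ in terms of ordinary Bernoulli polynomials, after which the same cancellation yields (\ref{27}).
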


Another important identity for Bernoulli polynomials is the Raabe's formula 
\[
m^{n-1}\sum_{k=0}^{m-1}B_{n}\left( x+\frac{k}{m}\right) =B_{n}\left(
mx\right) . 
\]%
Now, we want to extend the Raabe's formula to $p$-Bernoulli polynomials.

\begin{theorem}
\label{teo1}For $m\geq 1$ and $n,p\geq 0$,%
\begin{equation}
m^{n-1}\sum_{k=0}^{m-1}B_{n,p}\left( x+\frac{k}{m}\right) =\left( p+1\right)
B_{n}\left( mx\right) -p\sum_{k=0}^{n}\binom{n}{k}\frac{m^{k}B_{n-k}\left(
mx\right) B_{k,p}}{k+1}.  \label{31}
\end{equation}
\end{theorem}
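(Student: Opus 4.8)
The plan is to pass to exponential generating functions and reduce the entire identity to Theorem \ref{teo3}. First I would record the polynomial form of the main formula: combining Proposition \ref{pro2} with the generating function $\sum_{n\ge0}w_n(x;y)\,t^n/n!=e^{xt}/(1-y(e^t-1))$ of the two-variable geometric polynomials (Section 2) gives
\[
\frac{1}{p+1}\sum_{n\ge0}B_{n,p}(x)\frac{t^n}{n!}=\int_{-1}^{0}\frac{(1+y)^{p}e^{xt}}{1-y(e^{t}-1)}\,dy .
\]
Writing $g(s)=\sum_{k\ge0}B_{k,p}s^{k}/k!$, the numerical main formula reads $\int_{-1}^{0}(1+y)^{p}/(1-y(e^{s}-1))\,dy=g(s)/(p+1)$, and this is the only analytic input I will need about the $B_{k,p}$.

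Next I would compute the generating function of the left-hand side of (\ref{31}). Summing the integral representation over the progression and using $\sum_{k=0}^{m-1}e^{kt/m}=(e^{t}-1)/(e^{t/m}-1)$, then absorbing the factor $m^{n-1}$ via the substitution $t\mapsto mt$ (since $m^{n-1}t^{n}/n!=m^{-1}(mt)^{n}/n!$) and evaluating the $y$-integral through the main formula at argument $mt$, I expect
\[
\sum_{n\ge0}\Bigl[m^{n-1}\sum_{k=0}^{m-1}B_{n,p}\bigl(x+\tfrac{k}{m}\bigr)\Bigr]\frac{t^{n}}{n!}=\frac{e^{mxt}(e^{mt}-1)}{m(e^{t}-1)}\,g(mt).
\]
For the right-hand side, the term $(p+1)B_{n}(mx)$ contributes $(p+1)te^{mxt}/(e^{t}-1)$, while the convolution is a product of the generating function $te^{mxt}/(e^{t}-1)$ of $B_{n-k}(mx)$ with that of $m^{k}B_{k,p}/(k+1)$; since $\sum_{k}\frac{B_{k,p}}{k+1}\frac{s^{k}}{k!}=\frac{1}{s}\int_{0}^{s}g(u)\,du$, the latter equals $\frac{1}{mt}\int_{0}^{mt}g(u)\,du$. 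Hence the right-hand side has generating function
\[
(p+1)\frac{te^{mxt}}{e^{t}-1}-\frac{p\,e^{mxt}}{m(e^{t}-1)}\int_{0}^{mt}g(u)\,du .
\]

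Equating the two generating functions and cancelling the common factor $e^{mxt}/(m(e^{t}-1))$, the claim collapses, after setting $s=mt$, to the single identity
\[
(e^{s}-1)\,g(s)=(p+1)s-p\int_{0}^{s}g(u)\,du .
\]
Finally I would verify this by comparing coefficients of $s^{n}/n!$: the left-hand coefficient is $\sum_{k=0}^{n-1}\binom{n}{k}B_{k,p}$, while the right-hand coefficient is $0$ for $n=0$, equals $(p+1)-pB_{0,p}=1$ for $n=1$, and equals $-pB_{n-1,p}$ for $n\ge2$. The cases $n=0,1$ are immediate from $B_{0,p}=1$, and for $n\ge2$ the required equality $\sum_{k=0}^{n-1}\binom{n}{k}B_{k,p}=-pB_{n-1,p}$ is exactly Theorem \ref{teo3}. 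The main obstacle is the bookkeeping in the second paragraph: correctly threading the factor $m^{n-1}$ through the substitution $t\mapsto mt$, justifying the interchange of summation with the $y$-integration, and identifying the generating function of $B_{k,p}/(k+1)$ as an antiderivative of $g$. Once the reduction to the displayed identity is in place, the appeal to Theorem \ref{teo3} completes the proof.
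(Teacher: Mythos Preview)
Your argument is correct. The generating-function manipulation in the second paragraph checks out: after interchanging the sum with the $y$-integral and using the formula $\int_{-1}^{0}(1+y)^{p}/(1-y(e^{s}-1))\,dy=g(s)/(p+1)$ with $s=mt$, one indeed obtains $\dfrac{e^{mxt}(e^{mt}-1)}{m(e^{t}-1)}\,g(mt)$ for the left side; the right side is handled correctly as a Cauchy product, and the cancellation of $e^{mxt}/(m(e^{t}-1))$ yields exactly $(e^{s}-1)g(s)=(p+1)s-p\int_{0}^{s}g$, whose coefficient comparison is precisely Theorem~\ref{teo3} (with the index shift $n\mapsto n-1$).

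Your route is genuinely different from the paper's. The paper first establishes a Raabe-type identity for the two-variable geometric polynomials (equation~(\ref{15})), then multiplies by $(1+y)^{p+1}$, invokes the reflection formula~(\ref{24}) to convert $w_k(y)/y$ into $(-1)^k w_k(-y-1)$, and integrates using both~(\ref{17}) and the companion integral~(\ref{25}); Theorem~\ref{teo3} is never invoked. Your approach bypasses all of (\ref{15}), (\ref{24}) and (\ref{25}): by passing immediately to exponential generating functions you collapse the whole statement to a single identity in $g(s)$, which in turn is equivalent to Theorem~\ref{teo3}. The paper's approach has the virtue of producing the intermediate Raabe-type formula~(\ref{15}) for geometric polynomials, which is of independent interest and is reused elsewhere; your approach is shorter, more self-contained, and makes transparent that Theorem~\ref{teo1} is essentially Theorem~\ref{teo3} dressed up with the Bernoulli generating function.
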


Using the generating function technique, Chu and Zhou \cite{Chu} give
several convolution identities for Bernoulli numbers. Two of them are the
followings:%
\begin{eqnarray*}
\sum_{k=0}^{n}\binom{n}{k}\frac{B_{k+1}B_{n-k}}{k+1} &=&-B_{n}-B_{n+1}, \\
\sum_{k=0}^{n}\binom{n}{k}\frac{2^{k}B_{k+1}B_{n-k}}{k+1} &=&\frac{%
-B_{n+1}-\left( 2^{n-1}+1\right) B_{n}}{2}.
\end{eqnarray*}

If we set $p=1$ and $x=0$ in (\ref{31}) and use (\ref{16}) and (\ref{17}),
we have a close formula for a generalization of the above equations in the
following corollary.

\begin{corollary}
\label{cor2}For $m\geq 1$ and $n,p\geq 0$,%
\[
\sum_{k=0}^{n}\binom{n}{k}\frac{m^{k}B_{k+1}B_{n-k}}{k+1}=\frac{%
-mB_{n}-B_{n+1}}{m}+m^{n-1}\sum_{k=0}^{m-1}\frac{k}{m}B_{n}\left( \frac{k}{m}%
\right) . 
\]
\end{corollary}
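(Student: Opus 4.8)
The plan is to specialize the Raabe-type identity (\ref{31}) of Theorem \ref{teo1} to $p=1$ and $x=0$ and then reshape both sides. On the right-hand side, since $B_{n}\left( 0\right) =B_{n}$, the substitution gives $2B_{n}-\sum_{k=0}^{n}\binom{n}{k}m^{k}B_{n-k}B_{k,1}/\left( k+1\right) $. To expose the sum we are after, I would use the three-term recurrence (\ref{16}) at $p=0$, namely $B_{n+1}=B_{n+1,0}=-\frac{1}{2}B_{n,1}$, so that $B_{k,1}=-2B_{k+1}$. Feeding this in turns the right-hand side into
\[
2B_{n}+2\sum_{k=0}^{n}\binom{n}{k}\frac{m^{k}B_{k+1}B_{n-k}}{k+1},
\]
i.e. twice the target sum plus the correction term $2B_{n}$.

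The heart of the proof is to evaluate the left-hand side $m^{n-1}\sum_{k=0}^{m-1}B_{n,1}\left( k/m\right) $, and the key step — which I expect to be the main obstacle — is the pointwise polynomial identity
\[
\frac{1}{2}B_{n,1}\left( x\right) =xB_{n}\left( x\right) -B_{n+1}\left( x\right) .
\]
I would establish this by elementary bookkeeping: substituting $B_{j,1}=-2B_{j+1}$ into the convolution definition (\ref{18}) gives $B_{n,1}\left( x\right) =-2\sum_{j=0}^{n}\binom{n}{j}x^{n-j}B_{j+1}$, while expanding $xB_{n}\left( x\right) -B_{n+1}\left( x\right) $ through the explicit form of Bernoulli polynomials (the $p=0$ case of (\ref{18})) and collapsing the two resulting binomial sums via Pascal's rule $\binom{n}{j}-\binom{n+1}{j}=-\binom{n}{j-1}$ yields exactly $-\sum_{j=0}^{n}\binom{n}{j}x^{n-j}B_{j+1}$. (Equivalently one checks that both sides have exponential generating function $-e^{xt}\frac{d}{dt}\left( \frac{t}{e^{t}-1}\right) $.)

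With this identity in hand, the left-hand side splits as
\[
m^{n-1}\sum_{k=0}^{m-1}B_{n,1}\left( \frac{k}{m}\right) =2m^{n-1}\sum_{k=0}^{m-1}\frac{k}{m}B_{n}\left( \frac{k}{m}\right) -2m^{n-1}\sum_{k=0}^{m-1}B_{n+1}\left( \frac{k}{m}\right) .
\]
For the final sum I would apply the classical Raabe formula (recalled just before Theorem \ref{teo1}) in degree $n+1$ at $x=0$, giving $\sum_{k=0}^{m-1}B_{n+1}\left( k/m\right) =B_{n+1}/m^{n}$, hence $2m^{n-1}\sum_{k}B_{n+1}\left( k/m\right) =2B_{n+1}/m$. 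Equating the two reshaped sides, cancelling the common factor $2$, and solving for the target sum gives
\[
\sum_{k=0}^{n}\binom{n}{k}\frac{m^{k}B_{k+1}B_{n-k}}{k+1}=-B_{n}-\frac{B_{n+1}}{m}+m^{n-1}\sum_{k=0}^{m-1}\frac{k}{m}B_{n}\left( \frac{k}{m}\right) ,
\]
which is the claim once $-B_{n}-B_{n+1}/m$ is written as $\left( -mB_{n}-B_{n+1}\right) /m$. Apart from pinning down the signs in the pointwise identity for $B_{n,1}(x)$, the argument is routine specialization and bookkeeping with Raabe's formula.
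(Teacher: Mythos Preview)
Your proof is correct and follows the same specialization route the paper sketches (set $p=1$, $x=0$ in Theorem~\ref{teo1}, then use the three-term recurrence). One simplification: the pointwise identity you call ``the main obstacle,'' $\tfrac{1}{2}B_{n,1}(x)=xB_{n}(x)-B_{n+1}(x)$, is exactly the $p=0$ case of the polynomial three-term recurrence~(\ref{21}), so no separate bookkeeping with~(\ref{18}) and Pascal's rule is needed.
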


Finally, we evaluate a Faulhaber-type summation in terms of $p$-Bernoulli
polynomials and numbers which generalize the following finite summation \cite%
[p. 18, Eq. 1]{Gould}%
\[
\sum_{k=0}^{n}\frac{\left( -1\right) ^{k}}{\binom{n}{k}}=\frac{n+1}{n+2}%
\left( \left( -1\right) ^{n}+1\right) .
\]

\begin{theorem}
\label{teo2}For $n\geq 1$ and $p\geq 0$, we have%
\[
\sum_{k=0}^{n}\frac{k^{p}\left( -1\right) ^{k}}{\binom{n}{k}}=\frac{n+1}{n+2}%
\left[ \left( -1\right) ^{n+p}B_{p,n+1}\left( -n\right) +B_{p,n+1}\right] . 
\]
\end{theorem}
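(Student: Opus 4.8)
The plan is to compute the left-hand side through its exponential generating function in an auxiliary variable $x$ and to recognise the outcome as the main formula of Rahmani taken with parameter $n+1$. First I would replace the reciprocal binomial coefficient by the Beta integral $\binom{n}{k}^{-1}=(n+1)\int_{0}^{1}t^{k}(1-t)^{n-k}\,dt$, so that, writing $S_{p}$ for the sum in question,
\[
\sum_{p\geq 0}S_{p}\frac{x^{p}}{p!}=\sum_{k=0}^{n}\frac{(-1)^{k}e^{kx}}{\binom{n}{k}}=(n+1)\int_{0}^{1}(1-t)^{n}\sum_{k=0}^{n}\left(\frac{-te^{x}}{1-t}\right)^{k}dt .
\]
Summing the finite geometric progression and simplifying the resulting rational integrand gives
\[
\sum_{p\geq 0}S_{p}\frac{x^{p}}{p!}=(n+1)\int_{0}^{1}\frac{(1-t)^{n+1}-(-1)^{n+1}t^{n+1}e^{(n+1)x}}{1+t(e^{x}-1)}\,dt .
\]
After the substitution $y=-t$, the denominator becomes $1-y(e^{x}-1)$, which is exactly the kernel appearing in the integral representation of $p$-Bernoulli numbers.

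Next I would split the integral into two pieces. The first piece, $\int_{-1}^{0}(1+y)^{n+1}/(1-y(e^{x}-1))\,dy$, is precisely Rahmani's main formula with parameter $n+1$ (and variable $x$), so it contributes $\frac{n+1}{n+2}\sum_{q}B_{q,n+1}\frac{x^{q}}{q!}$; comparing coefficients of $x^{p}/p!$ this already produces the term $\frac{n+1}{n+2}B_{p,n+1}$. The second piece carries an extra factor $-(n+1)e^{(n+1)x}$ together with the numerator $y^{n+1}$ instead of $(1+y)^{n+1}$. To bring it into the same shape I would use the reflection $y\mapsto -1-w$, which turns $y^{n+1}$ into $(-1)^{n+1}(1+w)^{n+1}$ and converts the kernel into $e^{x}(1-w(e^{-x}-1))$; hence this piece equals $(-1)^{n+1}e^{-x}$ times the same integral evaluated at $-x$, i.e. $\frac{(-1)^{n+1}e^{-x}}{n+2}\sum_{q}B_{q,n+1}\frac{(-x)^{q}}{q!}$.

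Finally I would absorb the exponential prefactors. Collecting the constants, the second piece equals $-\frac{n+1}{n+2}(-1)^{n+1}e^{nx}\sum_{q}B_{q,n+1}\frac{(-x)^{q}}{q!}$, and the generating-function form of the convolution (\ref{18}), namely $\sum_{q}B_{q,n+1}(X)\frac{s^{q}}{q!}=e^{Xs}\sum_{q}B_{q,n+1}\frac{s^{q}}{q!}$, lets me fold the factor $e^{nx}$ into a shift of the argument: with $s=-x$ and $X=-n$ one gets $e^{nx}\sum_{q}B_{q,n+1}\frac{(-x)^{q}}{q!}=\sum_{q}B_{q,n+1}(-n)\frac{(-x)^{q}}{q!}$. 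Reading off the coefficient of $x^{p}/p!$ and simplifying the sign $-(-1)^{n+1}(-1)^{p}=(-1)^{n+p}$ then yields the claimed value $\frac{n+1}{n+2}\left[(-1)^{n+p}B_{p,n+1}(-n)+B_{p,n+1}\right]$.

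I expect the main obstacle to be the second integral: getting the reflection $y\mapsto -1-w$ to produce exactly the kernel $1-w(e^{-x}-1)$ (so that the main formula applies in the variable $-x$), and then tracking the signs $(-1)^{n+1}$ and $(-1)^{q}$ together with the exponential bookkeeping $e^{(n+1)x}e^{-x}=e^{nx}$ that must be converted into the evaluation at $-n$. Every factor there is a potential source of a sign error, whereas the first integral and the opening Beta-integral manipulation are routine.
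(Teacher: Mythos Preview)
Your argument is correct and genuinely different from the paper's. The paper works at a fixed $p$: it starts from the closed form for the arithmetic--geometric progression $\sum_{k=0}^{n}k^{p}y^{k}$ in terms of Eulerian polynomials, converts Eulerian to geometric polynomials, and then integrates in $y$ over $[-1,0]$; the resulting integrals are identified one by one via the Beta function, equation~(\ref{17}), equation~(\ref{25}), and the auxiliary recurrence~(\ref{36}) proved earlier in Section~3. You instead bundle all $p$ together by passing to the exponential generating function $\sum_{p}S_{p}x^{p}/p!$, use the Beta integral for $\binom{n}{k}^{-1}$ up front, sum the finite geometric series in $k$, and then recognise both halves of the resulting integral as instances of Rahmani's integral formula (once directly, once after the reflection $y\mapsto -1-w$ which sends the kernel to the same kernel in the variable $-x$). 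The shift to $B_{p,n+1}(-n)$ then drops out of the convolution~(\ref{18}) in generating-function form. Your route is shorter and needs neither the Eulerian-polynomial identity nor Proposition~(\ref{36}); the paper's route, on the other hand, stays entirely at the level of polynomial identities for geometric polynomials and never introduces an auxiliary generating variable, which keeps it closer to the rest of the paper's methodology.
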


The summary by sections is as follows: Section 2 is the preliminary section
where we give definitions and known results needed. In Section 3, we derive
a recurrence relation for $p$-Bernoulli and a Raabe-type relation for
geometric polynomials, which we need in the proofs of Theorem \ref{teo1} and
Theorem \ref{teo2}. In Section 4, we give the proofs of the results,
mentioned above.

\section{Preliminaries}

Geometric polynomials are defined by the exponential generating function 
\cite{T}%
\begin{equation}
\frac{1}{1-y\left( e^{t}-1\right) }=\sum_{n=0}^{\infty }w_{n}\left( y\right) 
\frac{t^{n}}{n!}.  \label{14}
\end{equation}%
They have an explicit formula 
\begin{equation}
w_{n}\left( y\right) =y\sum_{k=1}^{n}\QATOPD\{ \} {n}{k}\left( -1\right)
^{n+k}k!\left( y+1\right) ^{k-1},\text{ \ }n>0,  \label{23}
\end{equation}%
and a reflection formula 
\begin{equation}
w_{n}\left( y\right) =\left( -1\right) ^{n}\frac{y}{y+1}w_{n}\left(
-y-1\right) ,\text{ for }n>0,  \label{24}
\end{equation}%
Moreover, these polynomials are related to $p$-Bernoulli numbers with an
integral representation%
\begin{equation}
{\int\limits_{-1}^{0}}y^{p}w_{n}\left( y\right) dy=\left( -1\right) ^{n+p+1}%
\frac{p+1}{p+2}B_{n-1,p+1},\text{ for }n>1,\text{ }p\geq 0.  \label{25}
\end{equation}

See \cite{B, B2, B3, B4, BoyadzhievandDil, Dil1, Kargin1} for other
properties and applications of geometric polynomials.

Two variable geometric polynomials are defined by means of the following
generating function \cite{Kargin1}%
\begin{equation}
\sum_{n=0}^{\infty }w_{n}\left( x;y\right) \frac{t^{n}}{n!}=\frac{e^{xt}}{%
1-y\left( e^{t}-1\right) }.  \label{9}
\end{equation}%
Moreover, they are related to $w_{k}\left( y\right) $ with a convolution%
\begin{equation}
w_{n}\left( x;y\right) =\sum_{k=0}^{n}\binom{n}{k}w_{k}\left( y\right)
x^{n-k},  \label{10}
\end{equation}%
with a special case%
\begin{equation}
w_{n}\left( 0;y\right) =w_{n}\left( y\right) \text{.}  \label{29}
\end{equation}

\section{Some other basic properties}

In this section, in order to use in the proof of Theorem \ref{teo1} and
Theorem \ref{teo2}, we give a recurrence relation for $p$-Bernoulli
polynomials and a Raabe-type formula for two variable geometric polynomials.

For the proof of Theorem \ref{teo1}, we first need the following proposition.

\begin{proposition}
For $n\geq 1$ and $p\geq 0$, we have%
\begin{equation}
p^{2}\sum_{k=1}^{n}\binom{n+1}{k+1}y^{n-k}B_{k,p}=\left( p+1\right)
y^{n+1}+p\left( n+1\right) y^{n}-\left( p+1\right) B_{n+1,p-1}\left(
1+y\right) .  \label{36}
\end{equation}
\end{proposition}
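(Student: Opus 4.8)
The plan is to reduce (\ref{36}) to a polynomial identity in $y$ that can be settled by comparing coefficients, with Theorem \ref{teo3} doing the essential work. The natural starting point is the three-term recurrence (\ref{16}) written with $p$ replaced by $p-1$; solving it for $B_{n,p}$ gives the key substitution
\[
p^{2}B_{k,p}=\left( p+1\right) \left( p-1\right) B_{k,p-1}-\left( p+1\right)
B_{k+1,p-1}.
\]
Inserting this into the left-hand side of (\ref{36}) splits it as $\left(
p+1\right) \left( p-1\right) A-\left( p+1\right) C$, where $A=\sum_{k=1}^{n}
\binom{n+1}{k+1}y^{n-k}B_{k,p-1}$ and $C=\sum_{k=1}^{n}\binom{n+1}{k+1}
y^{n-k}B_{k+1,p-1}$.

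First I would dispose of $C$. Shifting the summation index by one and recognizing the result as a truncation of the convolution (\ref{18}) for $B_{n+1,p-1}\left( y\right) $, together with the values $B_{0,p-1}=1$ and $B_{1,p-1}=-1/\left( p+1\right) $, yields $C=B_{n+1,p-1}\left( y\right)
-y^{n+1}+\frac{n+1}{p+1}y^{n}$. Substituting this back, cancelling the common $\left( p+1\right) y^{n+1}$ against the first term on the right-hand side of (\ref{36}), and dividing by $p+1$, the whole statement collapses to the cleaner target
\[
\left( p-1\right) A+\left[ B_{n+1,p-1}\left( 1+y\right) -B_{n+1,p-1}\left(
y\right) \right] =\left( n+1\right) y^{n}.
\]

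It remains to prove this last identity, which is where the real work lies. I would expand the difference $B_{n+1,p-1}\left( 1+y\right) -B_{n+1,p-1}\left(
y\right) $ through (\ref{18}), so that each $B_{i,p-1}$ is multiplied by $
\left( 1+y\right) ^{n+1-i}-y^{n+1-i}$, and then read off the coefficient of $
y^{m}$ on both sides. The crucial device is the absorption identity $\binom{
n+1}{i}\binom{n+1-i}{m}=\binom{n+1}{m}\binom{n+1-m}{i}$, which factors $\binom{
n+1}{m}$ out of the coefficient of $y^{m}$ and leaves behind $
\sum_{i}\binom{n+1-m}{i}B_{i,p-1}$. Theorem \ref{teo3}, applied with $n$ replaced by $n-m$, evaluates this sum (after separating its top term $i=n+1-m$) as $-\left( p-1\right) B_{n-m,p-1}+B_{n+1-m,p-1}$. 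Of this, the $B_{n+1-m,p-1}$ piece cancels against the boundary term $\binom{n+1}{m}B_{n+1-m,p-1}$ arising from the $-y^{n+1-i}$ factors, while the surviving $-\left( p-1\right) \binom{n+1}{m}B_{n-m,p-1}$ is cancelled exactly by the contribution $\left( p-1\right) \binom{n+1}{m}B_{n-m,p-1}$ of $\left(
p-1\right) A$. Thus every coefficient with $0\leq m\leq n-1$ vanishes, the coefficient $m=n+1$ vanishes as well, and the coefficient $m=n$ works out to $n+1$, matching the right-hand side.

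The anticipated obstacle is purely bookkeeping. One must treat the extreme indices $m=n$ and $m=n+1$ separately, since there the range of validity $n-m\geq 1$ of Theorem \ref{teo3} and the truncation of the convolution (\ref{18}) behave differently; away from these cases the two families of terms annihilate cleanly. To guard against sign or indexing slips in the reduction, I would also verify the base case $n=1$, where (\ref{36}) reduces to the elementary statement $p^{2}B_{1,p}=-p^{2}/\left( p+2\right) $.
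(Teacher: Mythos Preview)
Your argument is correct, and the coefficient computation goes through exactly as you describe; the edge cases $m=n$ and $m=n+1$ behave as you claim, and the base case $n=1$ also checks out.

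Your route is, however, genuinely different from the paper's. The paper does not touch the three-term recurrence (\ref{16}) here at all. Instead it starts from the polynomial convolution $\sum_{k=1}^{n}\binom{n}{k}y^{n-k}B_{k,p}(x)=B_{n,p}(x+y)-y^{n}$, integrates both sides over $x\in[0,1]$, and evaluates each side separately: on the left it uses (\ref{7a}) (which is where Theorem \ref{teo3} enters) to produce the factor $-p/(n+1)$ and the sum $\sum_{k}\binom{n+1}{k+1}y^{n-k}B_{k,p}$; on the right it uses (\ref{19}) to get $\bigl(B_{n+1,p}(y+1)-B_{n+1,p}(y)\bigr)/(n+1)-y^{n}$ and then invokes the telescopic formula of Proposition \ref{pro1} (which in turn rests on the geometric-polynomial integral representation) to pass from level $p$ to level $p-1$. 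Your approach replaces this integration-plus-telescoping step by the purely algebraic substitution $p^{2}B_{k,p}=(p+1)(p-1)B_{k,p-1}-(p+1)B_{k+1,p-1}$ coming from (\ref{16}), and then closes with a direct coefficient comparison. The payoff is that your proof is entirely self-contained and bypasses both Proposition \ref{pro1} and the geometric-polynomial machinery; the paper's version is shorter on the page and stays within the integral-representation framework that drives the rest of the article. Both proofs ultimately hinge on Theorem \ref{teo3}, and both tacitly need $p\geq 1$ (the paper divides by $p$; you use $B_{k,p-1}$), which is consistent with the appearance of $B_{n+1,p-1}(1+y)$ in the statement.
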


\begin{proof}
From (\ref{18}), we have 
\begin{equation}
\sum_{k=1}^{n}\binom{n}{k}y^{n-k}B_{k,p}\left( x\right) =B_{n,p}\left(
x+y\right) -y^{n}.  \label{11}
\end{equation}%
Let integrate both sides of the above equation with respect to $x$ from $0$
to $1$. Then, using (\ref{7a}), the left hand side of (\ref{11}) becomes%
\begin{eqnarray}
\sum_{k=1}^{n}\binom{n}{k}y^{n-k}{\int\limits_{0}^{1}}B_{k,p}\left( x\right)
dx &=&-p\sum_{k=1}^{n}\binom{n}{k}\frac{y^{n-k}B_{k,p}}{k+1}  \nonumber \\
&=&\frac{-p}{n+1}\sum_{k=1}^{n}\binom{n+1}{k+1}y^{n-k}B_{k,p}.  \label{12}
\end{eqnarray}%
On the other hand, using (\ref{19}) and Proposition \ref{pro1} in the right
hand side of (\ref{11}), we have 
\begin{eqnarray*}
{\int\limits_{0}^{1}}\left[ B_{n,p}\left( x+y\right) -y^{n}\right] dx &=&{%
\int\limits_{y}^{y+1}}B_{n,p}\left( t\right) dt-y^{n}{\int\limits_{0}^{1}dx}
\\
&=&\frac{B_{n+1,p}\left( y+1\right) -B_{n+1,p}\left( y\right) }{n+1}-y^{n} \\
&=&\frac{p+1}{p\left( n+1\right) }\left[ B_{n+1,p-1}\left( y+1\right)
-y^{n+1}\right] -y^{n}.
\end{eqnarray*}%
Combining the above equation with (\ref{12}) gives the desired equation.
\end{proof}

Now, we give the Raabe-type formula for two variable geometric polynomials
in the following proposition. Later, we use it in the proof of Theorem \ref%
{teo2}.

\begin{proposition}
For $m\geq 1$ and $n,p\geq 0$,%
\begin{equation}
m^{n-1}\sum_{k=0}^{m-1}w_{n-1}\left( x+\frac{k}{m},y\right) =\frac{1}{ny}%
\sum_{k=1}^{n}\binom{n}{k}m^{k}B_{n-k}\left( mx\right) w_{k}\left( y\right) .
\label{15}
\end{equation}
\end{proposition}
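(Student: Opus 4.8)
The plan is to prove (\ref{15}) by comparing exponential generating functions in $t$ (with $x,y,m$ fixed), the auxiliary substitution $t\mapsto mt$ doing the main work of matching the two sides. Write $L_n$ and $R_n$ for the left- and right-hand sides of (\ref{15}), and set $S_j=\sum_{k=0}^{m-1}w_j\left(x+\frac{k}{m},y\right)$, so that $L_n=m^{n-1}S_{n-1}$. Since $L_n$ carries the shifted index $w_{n-1}$ while $R_n$ carries a factor $1/n$, it is cleanest to compare the generating functions of $nL_n$ and $nR_n$; the identity $L_n=R_n$ for $n\geq 1$ then follows by equating coefficients of $t^n/n!$.

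For the left-hand side I would first apply (\ref{9}) and sum the resulting geometric series $\sum_{k=0}^{m-1}\left(e^{t/m}\right)^k=\left(e^t-1\right)/\left(e^{t/m}-1\right)$ to obtain
\[
\sum_{j\geq 0}S_j\frac{t^j}{j!} = \frac{e^{xt}}{1-y\left(e^t-1\right)}\sum_{k=0}^{m-1}e^{kt/m} = \frac{e^{xt}}{1-y\left(e^t-1\right)}\cdot\frac{e^t-1}{e^{t/m}-1}.
\]
Because $nL_n=nm^{n-1}S_{n-1}$, multiplying by $t$ and substituting $t\mapsto mt$ (which turns $e^{t/m}-1$ into $e^t-1$) gives
\[
\sum_{n\geq 1}\left(nL_n\right)\frac{t^n}{n!} = t\sum_{j\geq 0}S_j\frac{\left(mt\right)^j}{j!} = \frac{t\,e^{mxt}\left(e^{mt}-1\right)}{\left(1-y\left(e^{mt}-1\right)\right)\left(e^t-1\right)}.
\]

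For the right-hand side I would recognize $\sum_{k=1}^{n}\binom{n}{k}m^{k}B_{n-k}\left(mx\right)w_{k}\left(y\right)$ as the binomial convolution of the sequences $m^{k}w_{k}\left(y\right)$ and $B_{n-k}\left(mx\right)$ with the $k=0$ term removed. By (\ref{14}) with $t$ replaced by $mt$ the first has generating function $\left(1-y\left(e^{mt}-1\right)\right)^{-1}$, and by (\ref{13}) the second has generating function $te^{mxt}/\left(e^{t}-1\right)$; the removed $k=0$ term (where $w_{0}\left(y\right)=1$) contributes exactly the Bernoulli factor alone. Hence, using $\left(1-y\left(e^{mt}-1\right)\right)^{-1}-1=y\left(e^{mt}-1\right)/\left(1-y\left(e^{mt}-1\right)\right)$ and cancelling $1/y$,
\[
\sum_{n\geq 1}\left(nR_n\right)\frac{t^n}{n!} = \frac{1}{y}\left(\frac{1}{1-y\left(e^{mt}-1\right)}-1\right)\frac{t\,e^{mxt}}{e^t-1} = \frac{t\,e^{mxt}\left(e^{mt}-1\right)}{\left(1-y\left(e^{mt}-1\right)\right)\left(e^t-1\right)},
\]
which coincides with the generating function found for $nL_n$, completing the proof.

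I expect the main obstacle to be the simultaneous bookkeeping of three index shifts: the drop from $w_n$ to $w_{n-1}$ on the left, the factor $1/n$ on the right, and the fact that the right-hand sum starts at $k=1$. Comparing the generating functions of $nL_n$ and $nR_n$ (rather than of $L_n$ and $R_n$) is precisely the device that makes the first two shifts align, while the third is handled by subtracting the $k=0$ convolution term; the substitution $t\mapsto mt$ is then what converts the geometric-series factor $\left(e^t-1\right)/\left(e^{t/m}-1\right)$ into the $\left(e^{mt}-1\right)/\left(e^t-1\right)$ appearing on the right.
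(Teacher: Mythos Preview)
Your proof is correct and follows essentially the same generating-function route as the paper: both compute $\sum_{j}S_{j}\,t^{j}/j!$ via (\ref{9}) and the finite geometric sum $\sum_{k=0}^{m-1}e^{kt/m}=(e^{t}-1)/(e^{t/m}-1)$, then recognise the Bernoulli and geometric-polynomial generating functions (\ref{13}), (\ref{14}) and equate coefficients. The only cosmetic difference is your rescaling $t\mapsto mt$, which places the factor $m^{k}$ with $w_{k}(y)$ and the Bernoulli factor in the form $te^{mxt}/(e^{t}-1)$, whereas the paper keeps $t$ and reads off $B_{n}(mx)/m^{n}$ from $(t/m)e^{xt}/(e^{t/m}-1)$; the two computations are identical up to this change of variable.
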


\begin{proof}
Using (\ref{9}) and the identity%
\[
\sum_{k=0}^{m-1}x^{k}=\frac{x^{m}-1}{x-1},
\]%
we have 
\begin{eqnarray*}
\sum_{n=0}^{\infty }\frac{t^{n}}{n!}\sum_{k=0}^{m-1}w_{n}\left( x+\frac{k}{m}%
,y\right)  &=&\sum_{k=0}^{m-1}\sum_{n=0}^{\infty }w_{n}\left( x+\frac{k}{m}%
,y\right) \frac{t^{n}}{n!} \\
&=&\frac{1}{1-y\left( e^{t}-1\right) }\sum_{k=0}^{m-1}e^{\left( x+\frac{k}{m}%
\right) t} \\
&=&\frac{e^{xt}}{1-y\left( e^{t}-1\right) }\frac{e^{t}-1}{e^{t/m}-1} \\
&=&\frac{e^{xt}}{e^{t/m}-1}\frac{1}{1-y\left( e^{t}-1\right) }\frac{\left(
1-\left( 1-y\left( e^{t}-1\right) \right) \right) }{y} \\
&=&\frac{1}{y\left( t/m\right) }\left( \frac{\left( t/m\right) e^{xt}}{%
e^{t/m}-1}\frac{1}{1-y\left( e^{t}-1\right) }-\frac{\left( t/m\right) e^{xt}%
}{e^{t/m}-1}\right) .
\end{eqnarray*}%
From (\ref{13}) and (\ref{14}), the above equation can be written as%
\[
\frac{yn}{m}\sum_{n=1}^{\infty }\frac{t^{n}}{n!}\sum_{k=0}^{m-1}w_{n-1}%
\left( x+\frac{k}{m},y\right) =\sum_{n=1}^{\infty }\frac{t^{n}}{n!}\left[
\sum_{k=1}^{n}\binom{n}{k}\frac{B_{n-k}\left( mx\right) w_{k}\left( y\right) 
}{m^{n-k}}-\frac{B_{n}\left( mx\right) }{m^{n}}\right] .
\]%
Finally, comparing the coefficients of $\frac{t^{n}}{n!}$ in the both sides
of the above equation, we get (\ref{15}).
\end{proof}

\section{Proofs}

In this section, we give the proofs of all results mentioned in Section 1.

\begin{proof}[Proof of Theorem \protect\ref{teo3}]
Using (\ref{24}) in the following equation \cite[Proposition 15]%
{BoyadzhievandDil}, we have 
\begin{eqnarray*}
\sum_{k=0}^{n}\binom{n}{k}w_{k}\left( y\right) &=&\frac{1+y}{y}w_{n}\left(
y\right) \\
&=&\left( -1\right) ^{n}w_{n}\left( -y-1\right) .
\end{eqnarray*}%
Multiplying both sides of the above equation by $\left( 1+y\right) ^{p}$,
integrating it with respect to $y$ from $-1$ to $0$ and using (\ref{17}) and
(\ref{25}), we achieve 
\begin{eqnarray*}
\frac{1}{p+1}\sum_{k=0}^{n}\binom{n}{k}B_{k,p} &=&\left( -1\right) ^{n}{%
\int\limits_{-1}^{0}}\left( {1+}y\right) ^{p}w_{n}\left( -y-1\right) dy \\
&=&\left( -1\right) ^{n+p}{\int\limits_{-1}^{0}}x^{p}w_{n}\left( x\right) dx
\\
&=&-\frac{p+1}{p+2}B_{n-1,p+1}.
\end{eqnarray*}%
Finally, using (\ref{16}) gives the desired equation.
\end{proof}

\begin{proof}[Proof of Theorem \protect\ref{teo4}]
Multiplying both sides of (\ref{23}) by $\left( 1+y\right) ^{p}$,
integrating it with respect to $y$ from $-1$ to $0$ and using (\ref{17}), we
have%
\begin{eqnarray*}
\frac{1}{p+1}B_{n,p} &=&\sum_{k=1}^{n}\QATOPD\{ \} {n}{k}\left( -1\right)
^{n+k}k!\int\limits_{-1}^{0}y\left( y+1\right) ^{p+k-1}dy \\
&=&\sum_{k=1}^{n}\QATOPD\{ \} {n}{k}\left( -1\right)
^{n+k+1}k!\int\limits_{0}^{1}\left( 1-x\right) x^{p+k-1}dx.
\end{eqnarray*}%
Finally, using the well known relation of Beta function 
\begin{equation}
B\left( x,y\right) =\int\limits_{0}^{1}\left( 1-t\right) ^{x-1}t^{y-1}dt=%
\frac{\left( x-1\right) !\left( y-1\right) !}{\left( x+y-1\right) !},
\label{34}
\end{equation}%
where $x,y=1,2,3,\cdots $, we obtain (\ref{8}).
\end{proof}

\begin{proof}[Proof of Proposition \protect\ref{pro2}]
Using the equations (\ref{17}) and (\ref{18}) in (\ref{10}), we have%
\begin{eqnarray*}
{\int\limits_{-1}^{0}}\left( {1+}y\right) ^{p}w_{n}\left( x;y\right) dy
&=&\sum_{k=0}^{n}\binom{n}{k}x^{n-k}{\int\limits_{-1}^{0}}\left( {1+}%
y\right) ^{p}w_{k}\left( y\right) dy \\
&=&\sum_{k=0}^{n}\binom{n}{k}x^{n-k}B_{k,p} \\
&=&\frac{1}{p+1}B_{n,p}.
\end{eqnarray*}
\end{proof}

\begin{proof}[Proof of Proposition \protect\ref{pro1}]
The two variable geometric polynomials have \cite[Eq. 14]{Kargin1}%
\[
w_{n}\left( x+1;y\right) -w_{n}\left( x;y\right) =\frac{1}{1+y}\left(
w_{n}\left( x+1,y\right) -x^{n}\right) .
\]%
Multiplying both sides of the above equation by $\left( 1+y\right) ^{p+1}$,
integrating it with respect to $y$ from $-1$ to $0$ and using (\ref{17})
yield (\ref{28}).
\end{proof}

\begin{proof}[Proof of Theorem \protect\ref{teo5}]
Replacing $x$ with $k$ in (\ref{28}) and summing over $k$ from $0$ to $m$,
we obtain%
\begin{eqnarray}
\frac{p+2}{p+1}\sum_{k=0}^{m}B_{n,p}\left( k+1\right) -\sum_{k=0}^{m}k^{n}
&=&\sum_{k=0}^{m}\left( B_{n,p+1}\left( k+1\right) -B_{n,p+1}\left( k\right)
\right)  \nonumber \\
&=&B_{n,p+1}\left( m+1\right) -B_{n,p+1}.  \label{30}
\end{eqnarray}%
If we use Bernoulli's well known identity for Faulhaber summation 
\[
\sum_{k=0}^{m}k^{n}=\frac{B_{n+1}\left( m+1\right) +B_{n+1}}{n+1}, 
\]%
in the second part of the left hand side of (\ref{30}), we arrive at the
first part of theorem.

For the second part of the theorem, if we use (\ref{16}) and (\ref{21}) for $%
p=0$, (\ref{4}) becomes%
\begin{eqnarray*}
\sum_{k=0}^{m}B_{n}\left( k+1\right) &=&\left( m+1\right) B_{n}\left(
m+1\right) -n\frac{B_{n+1}\left( m+1\right) +B_{n+1}}{n+1} \\
&=&\left( m+1\right) B_{n}\left( m+1\right) -n\sum_{k=0}^{m}k^{n}.
\end{eqnarray*}%
Then, we have (\ref{27}).
\end{proof}

\begin{proof}[Proof of Theorem \protect\ref{teo1}]
Multiplying both sides of (\ref{15}) by $\left( 1+y\right) ^{p+1}$ and using
(\ref{24}), we have%
\begin{eqnarray*}
&&m^{n-1}\sum_{k=0}^{m-1}\left( 1+y\right) ^{p+1}w_{n-1}\left( x+\frac{k}{m}%
,y\right) \\
&&\qquad \quad =\frac{1}{n}\sum_{k=1}^{n}\binom{n}{k}m^{k}B_{n-k}\left(
mx\right) \frac{\left( 1+y\right) ^{p+1}}{y}w_{k}\left( y\right) \\
&&\qquad \quad =\frac{1}{n}\sum_{k=1}^{n}\binom{n}{k}m^{k}B_{n-k}\left(
mx\right) \left( 1+y\right) ^{p}\left( -1\right) ^{k}w_{k}\left( -y-1\right)
\\
&&\qquad \quad =mB_{n-1}\left( mx\right) \left( 1+y\right) ^{p+1}+\frac{1}{n}%
\sum_{k=2}^{n}\binom{n}{k}m^{k}B_{n-k}\left( mx\right) \left( -1\right)
^{k}\left( 1+y\right) ^{p}w_{k}\left( -y-1\right) .
\end{eqnarray*}%
Integrating the above equation with respect to $y$ from $-1$ to $0$ and
using (\ref{17}) and (\ref{25}), we obtain%
\begin{eqnarray*}
&&\frac{m^{n-1}}{p+2}\sum_{k=0}^{m-1}B_{n-1,p+1}\left( x+\frac{k}{m}\right)
\\
&&\qquad \qquad =\frac{mB_{n-1}\left( mx\right) }{p+2}+\frac{1}{n}%
\sum_{k=2}^{n}\binom{n}{k}m^{k}B_{n-k}\left( mx\right) \left( -1\right)
^{k}\int\limits_{-1}^{0}\left( 1+y\right) ^{p}w_{k}\left( -y-1\right) dy \\
&&\qquad \qquad =\frac{mB_{n-1}\left( mx\right) }{p+2}+\frac{1}{n}%
\sum_{k=2}^{n}\binom{n}{k}m^{k}B_{n-k}\left( mx\right) \left( -1\right)
^{k+p}\int\limits_{-1}^{0}x^{p}w_{k}\left( x\right) dx \\
&&\qquad \qquad =\frac{mB_{n-1}\left( mx\right) }{p+2}-\frac{p+1}{n\left(
p+2\right) }\sum_{k=2}^{n}\binom{n}{k}m^{k}B_{n-k}\left( mx\right)
B_{k-1,p+1} \\
&&\qquad \qquad =mB_{n-1}\left( mx\right) -\frac{p+1}{n\left( p+2\right) }%
\sum_{k=1}^{n}\binom{n}{k}m^{k}B_{n-k}\left( mx\right) B_{k-1,p+1}.
\end{eqnarray*}%
Replacing $p+1$ with $p$ and $n$ with $n+1$, the above equation can be
rewritten as%
\begin{eqnarray*}
m^{n-1}\sum_{k=0}^{m-1}B_{n,p}\left( x+\frac{k}{m}\right) &=&\left(
p+1\right) B_{n}\left( mx\right) -\frac{p}{n+1}\sum_{k=1}^{n+1}\binom{n+1}{k}%
m^{k-1}B_{n+1-k}\left( mx\right) B_{k-1,p} \\
&=&\left( p+1\right) B_{n}\left( mx\right) -\frac{p}{n+1}\sum_{k=0}^{n}%
\binom{n+1}{k+1}m^{k}B_{n-k}\left( mx\right) B_{k,p} \\
&=&\left( p+1\right) B_{n}\left( mx\right) -p\sum_{k=0}^{n}\binom{n}{k}\frac{%
m^{k}B_{n-k}\left( mx\right) B_{k,p}}{k+1}.
\end{eqnarray*}
\end{proof}

\begin{proof}[Proof of Theorem \protect\ref{teo2}]
We have an arithmetic-geometric progression \cite{DeBruyn, WangandHsu}%
\begin{equation}
\sum_{k=0}^{n}k^{p}y^{k}=\frac{A_{p}\left( y\right) }{\left( 1-y\right)
^{p+1}}-y^{n+1}\sum_{k=0}^{p}\binom{p}{k}\left( n+1\right) ^{p-k}\frac{%
A_{k}\left( y\right) }{\left( 1-y\right) ^{k+1}},  \label{32}
\end{equation}%
where $A_{n}\left( y\right) $ is the Eulerian polynomial of degree $n$ \cite%
{COMTET}. These polynomials are closely related to the geometric polynomials
with relation \cite[Eq. 3.18]{B}%
\[
A_{n}\left( y\right) =\left( 1-y\right) ^{n}w_{n}\left( y\right) .
\]%
If we multiply both side of (\ref{32}) and use this relation, (\ref{32}) can
be rewritten as 
\[
\sum_{k=0}^{n}k^{p}y^{k}\left( 1+y\right) ^{n-k}=\left( 1+y\right)
^{n+1}w_{p}\left( y\right) -y^{n+1}\sum_{k=0}^{p}\binom{p}{k}\left(
n+1\right) ^{p-k}w_{k}\left( y\right) .
\]%
Integrating both sides of the above equation with respect $y$ from $-1$ to $0
$, we have%
\begin{eqnarray}
&&\sum_{k=0}^{n}k^{p}\int\limits_{-1}^{0}y^{k}\left( 1+y\right) ^{n-k}dy 
\nonumber \\
&&\quad =\int\limits_{-1}^{0}\left( 1+y\right) ^{n+1}w_{p}\left( y\right)
dy-\left( n+1\right) ^{p}\int\limits_{-1}^{0}y^{n+1}dy-p\left( n+1\right)
^{p-1}\int\limits_{-1}^{0}y^{n+2}dy  \nonumber \\
&&\qquad -\sum_{k=2}^{p}\binom{p}{k}\left( n+1\right)
^{p-k}\int\limits_{-1}^{0}y^{n+1}w_{k}\left( y\right) .  \label{33}
\end{eqnarray}%
Using (\ref{34}) in the left hand side of (\ref{33}) yields%
\[
\sum_{k=0}^{n}k^{p}\int\limits_{-1}^{0}y^{k}\left( 1+y\right) ^{n-k}dy=\frac{%
1}{n+1}\sum_{k=0}^{n}\frac{\left( -1\right) ^{k}}{\binom{n}{k}}.
\]%
From (\ref{17}), the first integral in the right hand side of (\ref{33})
becomes%
\[
\int\limits_{-1}^{0}\left( 1+y\right) ^{n+1}w_{p}\left( y\right) dy=\frac{1}{%
n+2}B_{p,n+1}.
\]%
The second and third integrals in the right hand side can be evaluated
easily. For the last integral in right hand side of (\ref{33}), if we use (%
\ref{36}) and (\ref{25}), we have 
\begin{eqnarray*}
&&\sum_{k=2}^{p}\binom{p}{k}\left( n+1\right)
^{p-k}\int\limits_{-1}^{0}y^{n+1}w_{k}\left( y\right)  \\
&&\qquad \qquad =\frac{\left( -1\right) ^{n+p}\left( n+1\right) ^{p}}{n+2}+%
\frac{\left( -1\right) ^{n+1}p\left( n+1\right) ^{p-1}}{n+3}-\frac{\left(
-1\right) ^{n+p}B_{p,n+1}\left( -n\right) }{n+2}.
\end{eqnarray*}%
Finally, combining all these evaluated integrals give the desired equation.
\end{proof}


\begin{thebibliography}{99}
\bibitem{B} K. N. Boyadzhiev, A series transformation formula and related
polynomials, \textit{Int. J. Math. Math. Sci.} \textbf{23} (2005),
3849--3866.

\bibitem{B2} K. N. Boyadzhiev, Exponential polynomials, Stirling numbers and
evaluation of some gamma integrals, \textit{Abstr. Appl. Anal.} \textbf{18}
(2009), 1--18.

\bibitem{B3} K. N. Boyadzhiev, Apostol-Bernoulli functions, derivative
polynomials and Eulerian polynomials, \textit{Adv. Appl. Discrete Math.} 
\textbf{1} (2008), 109--122.

\bibitem{B4} K. N. Boyadzhiev, Close encounters with the Stirling numbers of
the second kind, \textit{Math. Mag.} \textbf{85} (2012), 252--266.

\bibitem{BoyadzhievandDil} K. N. Boyadzhiev and A. Dil, Geometric
polynomials: properties and applications to series with zeta values, \textit{%
Anal. Math.} \textbf{42} (2016), 203--224.

\bibitem{Broder} A. Z. Broder, The $r$-Stirling numbers, Discrete Math. 49
(1984) 241--259.

\bibitem{Can1} M. Can and M. C. Da\u{g}l\i , Extended Bernoulli and Stirling
matrices and related combinatorial identities, Linear Algebra Appl. 444
(2014), 114--131.

\bibitem{Cenkci2} M. Cenkci, An explicit formula for generalized potential
polynomials and its applications, Discrete Math, 309 (2009) 1498-1510.

\bibitem{Chu} W. Chu and R. R. Zhou, Convolutions of Bernoulli and Euler
polynomials, \textit{Sarajevo J. Math.} \textbf{6} (2010), 147-163.

\bibitem{COMTET} L. Comtet, \textit{Advanced Combinatorics,} Riedel
Publishing Co., 1974.

\bibitem{DeBruyn} G. F. C. DeBruyn, Formulas for $a+a^{2}2^{p}+a^{3}3^{p}+%
\cdots +a^{n}n^{p}$, \textit{Fibonacci Quart.} 33 2 (1995), 98-103.

\bibitem{Diletal} A. Dil and V. Kurt, Investigating geometric and
exponential polynomials with Euler-Seidel matrices, \textit{J. Integer Seq.} 
\textbf{14} (2011), Article 11.4.6.

\bibitem{Dil1} A. Dil and V. Kurt, Polynomials related to harmonic numbers
and evaluation of harmonic number series I, \textit{Integers} \textbf{12}
(2012), \#A38.

\bibitem{Graham} R. L. Graham, D. E. Knuth and O. Patashnik, \textit{%
Concrete Mathematics}, Addison-Wesley Publ. Co., 1994.

\bibitem{Gould} H. W. Gould, \textit{Combinatorial Identities, }Morgantown
W. Va., 1972.

\bibitem{KELLER} B. C. Kellner, Identities between polynomials related to
Stirling and harmonic numbers, \textit{Integers} \textbf{14} (2014), \#A54.

\bibitem{Kargin1} L. Karg\i n, Some formulae for products of Fubini
polynomials with applications, \textit{J. Integer Seq.} \textbf{20} (2017),
Article 17.4.4.

\bibitem{Merca2} M. Merca, A new connection between $r$-Whitney numbers and
Bernoulli polynomials. \textit{Integral Transforms Spec. Funct.} 25 (12)
(2014) 937--942.

\bibitem{Merca1} M. Merca, A connection between Jacobi--Stirling numbers and
Bernoulli polynomials, \textit{J. Number Theory }151 (2015) 223--229.

\bibitem{Mezo2} I. Mez\H{o}, A new formula for the Bernoulli polynomials,
Results in Mathematics, 58 (2010) 329--335.

\bibitem{Mihioubi2} M. Mihoubi and M. Tiachachat, Some applications of the $%
r $-Whitney numbers, \textit{C.R. Acad. Sci. Paris Ser.I} 352 (2014)
965--969.

\bibitem{T} S. M. Tanny, On some numbers related to the Bell numbers, 
\textit{Canad. Math. Bull.} \textbf{17} (1974), 733--738.

\bibitem{Rahmani} M. Rahmani, On $p$-Bernoulli numbers and polynomials, 
\textit{J. Number Theory} \textbf{157} (2015), 350--366.

\bibitem{WangandHsu} X. Wang and L. C. Hsu, A summation formula for power
series using Eulerian fractions, \textit{Fibonacci Quart.} 41 1 (2003),
23--30
\end{thebibliography}
\end{document}